\newcommand{\mcA}{\mathcal{A}}
\newcommand{\ZZ}{\mathbb{Z}}
\newcommand{\RR}{\mathbb{R}}
\newcommand{\va}{\mathbf{a}}
\newcommand{\vx}{\mathbf{x}}
\newcommand{\valpha}{\boldsymbol{\alpha}}
\newcommand{\vbeta}{\boldsymbol{\beta}}
\newcommand{\vtheta}{\boldsymbol{\theta}}
\newcommand{\vtau}{\boldsymbol{\tau}}
\newcommand{\vv}{\mathbf{v}}
\newcommand{\tDelta}{\widetilde{\Delta}}
\newcommand{\tPsi}{\widetilde{\Psi}}
\newcommand{\tI}{\widetilde{I}}
\newcommand{\tK}{\widetilde{K}}
\newcommand{\mes}{\operatorname{mes}}
\renewcommand{\Re}{\operatorname{Re}}
\newcommand{\pder}[2]{\frac{\partial #1}{\partial #2}}
\newtheorem{theorem}{Theorem}
\newtheorem{lemma}{Lemma}
\newtheorem{cor}{Corollary}
\title[Distribution of polynomial discriminants]{On the distribution of polynomial discriminants: totally real case}
\author[D.~V.~Koleda]{Denis V. Koleda}
 \address{Denis V. Koleda,
 Institute of Mathematics,
 National Academy of Sciences of Belarus,
 220072 Minsk, Belarus}
 \email{koledad@rambler.ru}
\subjclass[2010]{Primary, 11J25; secondary, 11N45, 11C08, 65H04, 26C10}
\keywords{Polynomial discriminant, polynomials with totally real zeros, conjugate algebraic numbers, integer polynomials}
\begin{document}

\maketitle

\begin{abstract}
In the paper we study the distribution of the discriminant $D(P)$ of polynomials $P$ from the class $\mathcal{P}_{n}(Q)$ of all integer polynomials of degree~$n$ and height at most~$Q$.
We evaluate the asymptotic number of polynomials $P\in \mathcal{P}_{n}(Q)$ having all the roots real and satisfying the inequality $|D(P)|\le X$ as $Q\to\infty$ and $X/Q^{2n-2}\to 0$.
\end{abstract}

\section*{Brief outline}

Section \ref{sec-intro} contains the background of the subject and the main results of the paper.
Section \ref{sec-aux} includes auxiliary proposions necessary to prove the main results; the reader can skip this section in the first reading.
In Section \ref{sec-count} we express the distribution function of the discriminant via the volumes of specific regions.
In Section \ref{sec-real} we study the asymptotic behaviour of the corresponding volume for small values of discriminant of polynomials having all the roots real.

Everywhere in the paper the degree $n\ge 4$ of polynomials is fixed; the parameter $Q$ bounding heights of polynomials from above grows to infinity.

\section{Introduction and main results}\label{sec-intro}

\subsection{Basic definitions}

The number
\[
 D(P)=a_{n}^{2n-2}\prod_{1\le i<j\le n}(\alpha_{i}-\alpha_{j})^{2}
\]
is called the discriminant of the polynomial
\[
P(x) = \sum_{k=0}^n a_k x^k = a_n \prod_{j=1}^n (x-\alpha_j).
\]

As a function of the polynomial coefficients, the discriminant can be written in the form of a $(2n-1)\times(2n-1)$ determinant:
\begin{equation*}
D(P) = (-1)^{\frac{n(n-1)}{2}}
\begin{vmatrix}
1 & a_{n-1} & \cdots & a_2 & a_1 & a_0 & & & \\
 & a_n & a_{n-1} & \cdots & a_2 & a_1 & a_0  & & \\
 & & \ddots & \ddots & \ddots & \ddots & \ddots & \ddots & \\
 & & & a_n & a_{n-1} & \cdots & a_2 & a_1 & a_0\\
n & \cdots & 3 a_3 & 2 a_2 & a_1 & & & & \\
 & n a_n & \cdots & 3 a_3 & 2a_2 & a_1 & & & \\
 & & \ddots & \ddots & \ddots & \ddots & \ddots & & \\
 & & & n a_n & \cdots & 3 a_3 & 2 a_2 & a_1 & \\
 & & & & n a_n & \cdots & 3 a_3 & 2 a_2 & a_1
\end{vmatrix},
\end{equation*}
where the empty entries are filled with zeros.

For given $n \in \mathbb{N}$ and $Q>1$, denote by $\mathcal{P}_{n}(Q)$ the set of all integer polynomials of degree $n$ and height $h(P)\le Q$.

For $v\ge 0$ define the following sets
\begin{gather}\label{eq-PQv}
\mathcal{P}_{n}(Q,v)=\left\{P\in \mathcal{P}_{n}(Q) : |D(P)|\le Q^{2n-2-2v}\right\},\\
\mathcal{P}^{(s)}_{n}(Q,v)=\left\{P\in \mathcal{P}_{n}(Q,v) : P \text{ has exactly $2s$ complex roots} \right\}. \notag
\end{gather}

Evidently,
\[
\mathcal{P}_{n}(Q,v) = \bigcup_{0\le s\le \frac{n}{2}} \mathcal{P}^{(s)}_{n}(Q,v).
\]

{\sc Remark.}
The most popular height functions in Number Theory are the naive height $H(P)=\max_{0\le k\le n} |a_k|$,
the Mahler measure $M(P)=a_n\prod_{j=1}^n \max(1,|\alpha_j|)$ and the length $L(P)=\sum_{k=0}^n |a_k|$ of the polynomial $P$.
Properties shared by these height functions are described in Subsection \ref{sbs-height}.
Due to these properties we are able to prove our main result (Theorem \ref{thm-small-discr} below) in the same way for all these height functions. So, we do not specify upon which height function $h$ the definition of $\mathcal{P}_{n}(Q)$
is based.

\subsection{Notation}
Here we explain the asymptotic notation we use.
The expression ``$f(x)\sim g(x)$ as $x\to x_0$'' is equivalent to $\lim_{x\to x_0} f(x)/g(x)=1$.
The statement ``$f(x)\ll_n g(x)$ as $x\to x_0$'' means the existence of a positive constant $c_n$ (depending on $n$ only) such that $|f(x)|\le c_n |g(x)|$ for all $x$ in a neighbourhood of $x_0$.
Note that $f(x)\ll g(x)$ is tantamount to $f(x)=O(g(x))$.
The notation $f(x)\asymp_n g(x)$ is a shortening for the two-sided asymptotic inequality $g(x)\ll_n f(x)\ll_n g(x)$.

For a finite set $S$ we denote its cardinality by $\#S$. For a set $S\subset \Bbb R^m$ the expression $\mes_d S$ denotes the $d$-dimensional Lebesgue measure of this set ($d\le m$).

To simplify the notation of quantities used in the paper, for some of them we do not indicate explicitly their dependence on $n$, since everywhere the degree $n$ is fixed.

\subsection{Background}

The discriminant of a polynomial characterizes at large the distances between the roots of the polynomial \cite{BM2010}, \cite{jhE2004}.
So the discriminant and its properties can be of big help in various problems, especially in the theory of Diophantine approximation.
See \cite{bV1961} as an example of such an application to the cubic case of the Mahler conjecture \cite{kM1932, vS1967}; the proof \cite{bV1961} is built upon Davenport's estimate \cite{hD1961} of the number of integer cubic polynomials having small discriminant.
This state of things raises big interest to the disctribution of the values of the polynomial discriminant.

There are a number of effective results aimed at problems related to algorithmic solution of Diophantine equations.
In this approach, the effective bounds on the extreme values of some quantities are usually of interest.
In 1970's Gy\H{o}ry established a series of such effective estimates \cite{kG1973, kG1974, kG1976, kG1978} concerning the distribution of the discriminants of integer polynomials; several of these results are improved in \cite{kG2006}.
A lot of further references and details on this topic can be found in the survey \cite{kG2006} by Gy\H{o}ry and the book \cite{EG2016} by Evertse and Gy\H{o}ry.

Another approach is concerned about the asymptotic statistics of polynomials with small discriminants. In this paper, we are mostly interested in problems from this second area. Now, we shortly tell about results directly related to the subject of the paper.

A possible way to state the problem about the distribution of polynomial discriminants is to ask to find lower and upper estimates, as close as possible, of the form
\[
Q^{f_*(v)} \ll_n \#\mathcal{P}_n(Q,v) \ll_n Q^{f^*(v)},
\]
where $f_*(v)$ and $f^*(v)$ are functions of $v$, with $0\le v\le n-1$.

There are plenty of papers dealing with estimating the quantity $\#\mathcal{P}_n(Q,v)$ and its close relatives and generalisations.

In 2008, Bernik, G\"otze and Kukso \cite{BGK08} proved that for $0<v<\frac12$
\[
\#\mathcal{P}_n(Q,v) \gg_n Q^{n+1-2v}.
\]

The upper bounds for degree $n=3$ one can find in \cite{Koleda2010}, \cite{KGK13}, \cite{BeBuGo2017};
on the asymptotics for $n=2$ see \cite{GKK13}.

Also there are papers concerning this problem in similar settings with $p$-adic norm (instead of usual absolute value) used to bound discriminant values in \eqref{eq-PQv} (see \cite{kG1978-p}, \cite{BGK-Litwa08}, \cite{BeBuDo2018}). See also the survey~\cite{BBGK13}.

For general $n$, the best up-to-date result regarding bounds on $\#\mathcal{P}_n(Q,v)$ is the one by Beresnevich, Bernik, G\"otze \cite{BBG15} who proved
\begin{equation}\label{eq-low-bound}
\#\mathcal{P}_n(Q,v) \gg_n Q^{n+1-\frac{n+2}{n} v} \quad \text{for } 0<v<n-1.
\end{equation}

In addition, using probabilistic methods, G\"otze and Zaporozhets \cite{fGdZ14} proved the existence of a continuous function $\phi_n$ such that
\begin{equation}\label{eq-dis-dis}
\sup_{-\infty\le a<b\le\infty} \left|\Bbb P\left\{a\le \frac{D(P)}{Q^{2n-2}}\le b\right\}-\int_a^b\phi_n(x)\,dx\right|
\ll_n \frac{1}{\log Q},
\end{equation}
where $\Bbb P\{A\}$ denotes the probability of an event $A$,
and the polynomial $P$ is picked at random uniformly from $\mathcal{P}_n(Q)$.

Here we prove that the difference in the left-hand side of \eqref{eq-dis-dis} can be estimated by $C_n/Q$ insead of $C_n/\log Q$.
Moreover, we get such results for every particular signature $s$.

Our main result is the exact asymptotics of $\#\mathcal{P}_n(Q,v)$ as $Q\to\infty$. In the present paper we extend the ideas from \cite{dK12-Trudy}, where the lower bound \eqref{eq-low-bound} was proved for $0<v<\frac{n}{n+2}$.
Another essential ingredient of our proof is the Selberg integral. We are going to prove that
\[
\#\mathcal{P}_n^{(0)}(Q,v) \asymp_n Q^{n+1-\frac{n+2}{n} v} \quad \text{for } 0<v<\frac{n}{n+2}.
\]

\subsection{Main results}

For $X\ge 0$, $Q>0$ and $0\le s\le \frac{n}{2}$, define the counting function
\[
N_s(Q,X) := \# \{ P \in \mathcal{P}_n(Q) : \text{$P$ has exaclty $s$ pairs of complex roots}, \ |D(P)| \le X \}.
\]
According to the following theorem, for every $s$ and sufficiently large $Q$ the overall shape of the function $N_s(Q,X)$ can be asymptotically described by a continuous function $f_s$.
\begin{theorem}\label{thm-lim-func}
Let $n\ge 3$ be an integer.
For every $s\ge 0$
there exists a positive continuous function $f_s:\RR\to\RR$ such that
\[
\left|N_s(Q,X) - Q^{n+1} f_s\!\left(\frac{X}{Q^{2n-2}}\right)\right|\ll_n Q^n,
\]
where the implicit constant depend on $n$ only.

The function $f_s(\delta)$ is monotonically increasing as $\delta$ grows, and $\lim_{\delta\to 0} f_s(\delta)=0$.
\end{theorem}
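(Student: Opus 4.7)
My strategy is to compare the lattice count $N_s(Q, X)$ with the volume of the corresponding continuous region in $\RR^{n+1}$ and then extract the properties of the limiting volume function. Identify degree-$n$ integer polynomials with lattice points $\va = (a_0, \ldots, a_n) \in \ZZ^{n+1}$ and set
\[
R_s(Q, X) := \{\va \in \RR^{n+1} \colon h(P) \le Q,\ P \text{ has exactly } 2s \text{ complex roots},\ |D(P)| \le X\}.
\]
Since $h$ is homogeneous of degree $1$ in the coefficients (a property shared by $H$, $M$, and $L$, cf.~Subsection~\ref{sbs-height}), $D$ is homogeneous of degree $2n-2$, and the signature is scale-invariant, the dilation $\va \mapsto Q\va$ maps $R_s(1, X/Q^{2n-2})$ bijectively onto $R_s(Q, X)$. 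Defining $f_s(\delta) := \mes_{n+1} R_s(1, \delta)$ therefore gives $\mes_{n+1} R_s(Q, X) = Q^{n+1} f_s(X/Q^{2n-2})$.

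Next, I would invoke Davenport's lemma applied to the semi-algebraic set $R_s(Q, X)$. This set lies inside the height box of side $O(Q)$ and is further cut out by polynomial inequalities whose degrees depend only on $n$: the inequality $|D(P)| \le X$ of degree $2n-2$ and a fixed sign pattern of Sturm-type subresultants (polynomials of degree $O_n(1)$ in $\va$) fixing the signature. Since each coordinate line meets the boundary $\partial R_s(Q, X)$ in $O_n(1)$ points, the standard Davenport bound gives
\[
\bigl|N_s(Q, X) - \mes_{n+1} R_s(Q, X)\bigr| \ll_n Q^n,
\]
the $Q^n$ arising from the $n$-dimensional projections of the bounding box. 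Combined with the scaling identity above, this is the claimed estimate.

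It remains to verify the four properties of $f_s$. Monotonicity follows at once from $R_s(1, \delta_1) \subset R_s(1, \delta_2)$. Continuity holds because for each $\delta > 0$ the level surface $\{|D| = \delta\}$ is a real algebraic hypersurface of $(n+1)$-dimensional measure zero, so the monotone function $f_s$ has no jumps. The limit $f_s(\delta) \to 0$ as $\delta \to 0$ comes from $\bigcap_{\delta > 0} R_s(1, \delta) \subset \{D = 0\}$, again of measure zero. Strict positivity on $(0, \infty)$ holds because every nonempty signature component inside $\{h < 1\}$ has boundary touching the discriminant variety $\{D = 0\}$: one can merge a pair of real roots when $2s < n$, or a complex conjugate pair when $s \ge 1$, to approach the discriminant locus from within the signature-$s$ region, producing open sets of polynomials of prescribed signature with arbitrarily small $|D|$.

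The principal obstacle is ensuring that the $O(Q^n)$ error in the Davenport step is uniform in $X$. Uniformity follows from the fact that all defining polynomial inequalities have complexity depending only on $n$, but one must rule out thin boundary accretions that could arise as $X \to 0$; this is the standard content of the semi-algebraic form of the Davenport--Lipschitz lattice-point principle.
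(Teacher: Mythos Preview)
Your proposal is correct and follows essentially the same route as the paper: define $f_s(\delta)$ as the $(n{+}1)$-dimensional volume of the normalized region $\widetilde{\mathcal D}_s(\delta)$, apply Davenport's lattice-point lemma to obtain the $O_n(Q^n)$ error, and read off monotonicity, continuity, and the limit at $0$ from elementary measure-theoretic considerations (the paper packages the continuity step as Lemma~\ref{lm-cont-meas}). If anything, you are slightly more explicit than the paper in two places---describing the signature condition via Sturm-type subresultants so that Davenport's lemma genuinely applies, and arguing strict positivity of $f_s$ on $(0,\infty)$---both of which the paper leaves implicit.
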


Obviously, if a polynomial $P$ is picked at random uniformly from $\mathcal{P}_n(Q)$, then for $\delta\ge 0$
\[
\Bbb P\left\{\frac{|D(P)|}{Q^{2n-2}}\le \delta \right\}
= \frac{1}{\#\mathcal{P}_n(Q)} \sum_{0\le s\le \frac{n}{2}} N_s(Q,\delta Q^{2n-2}).
\]
Moreover, since $\#\mathcal{P}_n(Q)=2Q(2Q+1)^n$, for the function $\phi_n$ in \eqref{eq-dis-dis} we have
\[
\int_{-\delta}^{\delta} \phi_n(x) dx= 2^{-n-1}\sum_{0\le s\le n/2} f_s(\delta),
\]
Thus, Theorem \ref{thm-lim-func} shows that the difference in \eqref{eq-dis-dis} can be estimated as $C_n Q^{-1}$, where the constant $C_n$ depends on $n$ only.

In the case $s=0$, that is, when all the roots are real, we prove the 

\begin{theorem}\label{thm-small-discr}
Let $n\ge 2$ be an integer. In the totally real case (that is, for $s=0$) we have
\[
f_0(\delta) \sim \lambda_0\,\delta^{\frac{n+2}{2n}}, \quad \text{as} \quad \delta\to 0.
\]
The constant $\lambda_0$ can be computed explicitly and depends only on $n$ and the height function~$h$.
\end{theorem}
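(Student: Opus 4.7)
The plan is to realise $f_0(\delta)$ as a Lebesgue volume in coefficient space, pass to root coordinates where the Vandermonde Jacobian and the $|D|^{1/2}$ factor combine favourably, and then isolate the leading-order behaviour by a rescaling adapted to the small discriminant, evaluating the limit integral with a Selberg-type formula.

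By Theorem~\ref{thm-lim-func} and the volume representation developed in Section~\ref{sec-count}, $f_0(\delta)$ coincides, up to the normalisation dictated by $h$, with the $(n{+}1)$-dimensional Lebesgue measure of the set of polynomials $P\in\RR[x]$ of degree $n$ with $h(P)\le 1$, $|D(P)|\le\delta$ and all roots real. Writing $P(x)=a_n\prod_{i=1}^n(x-\alpha_i)$ and noting that the Jacobian of $(a_n,\alpha_1,\ldots,\alpha_n)\mapsto(a_0,\ldots,a_n)$ equals $|a_n|^n\prod_{i<j}|\alpha_i-\alpha_j|=|a_n|\cdot|D(P)|^{1/2}$, one arrives at
\[
f_0(\delta)=\frac{1}{n!}\int\chi[P\text{ allowed}]\,|a_n|\,|D(P)|^{1/2}\,da_n\,d\alpha_1\cdots d\alpha_n,
\]
where "allowed" abbreviates the three constraints above.

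To separate the size of the root cluster from its shape I introduce $c=\bar\alpha$, $\tau>0$, and $\beta\in\Sigma:=\{\beta\in\RR^n:\sum\beta_i=0,\ \sum\beta_i^2=n\}$ via $\alpha_i=c+\tau\beta_i$; then $d\alpha_1\cdots d\alpha_n=J(\beta)\,\tau^{n-1}\,dc\,d\tau\,d\sigma(\beta)$ and $|D(P)|=a_n^{2n-2}\tau^{n(n-1)}\Delta(\beta)^2$ with $\Delta(\beta):=\prod_{i<j}|\beta_i-\beta_j|$. The discriminant bound reads $\tau\le\tau_*=(\delta/(a_n^{2n-2}\Delta(\beta)^2))^{1/(n(n-1))}$, whereas the height bound $h(P)\le 1$, together with the properties of $h$ listed in Subsection~\ref{sbs-height}, confines $(a_n,c,\tau)$ to a compact region effectively described by $|a_n|\le 1$, $|c|\lesssim|a_n|^{-1/n}$, $\tau\le T\asymp|a_n|^{-1/n}$. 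After performing the $\tau$-integration up to $\min(\tau_*,T)$ (producing $\min(\tau_*,T)^{n(n+1)/2}/(n(n+1)/2)$), the rescaling
\[
a_n=\delta^{1/(n-1)}\tilde a,\quad c=\delta^{-1/(n(n-1))}\tilde c,\quad\tau=\delta^{-1/(n(n-1))}\tilde\tau
\]
collapses both caps into $\delta$-independent inequalities and, via the dimensional identity $\tfrac{n+1}{2(n-1)}-\tfrac{1}{n(n-1)}=\tfrac{n+2}{2n}$, extracts the common prefactor $\delta^{(n+2)/(2n)}$. The leftover finite integral over $(\tilde a,\tilde c,\beta)$ decouples into an elementary integral in $(\tilde a,\tilde c)$ whose domain (and hence numerical value) is explicit in terms of $h$, multiplied by a shape-space integral of Selberg type involving a Vandermonde weight on $\Sigma$, known in closed form. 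Assembling these factors delivers $\lambda_0$.

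The principal obstacle lies in neighbourhoods of the strata of $\Sigma$ where $\Delta(\beta)\to 0$, i.e.\ where several $\beta_i$ coincide. Geometrically these correspond to polynomials on higher-codimension components of the discriminant locus, where the single-cluster ansatz ceases to be optimal; in the rescaled picture the dichotomy "$\tau_*\le T$ versus $T\le\tau_*$" automatically pushes such points into the height-bound regime, and one must verify that the two regime contributions—each of which may, for small $n$, contain a formally divergent shape integral paired with a vanishing $\delta$-power prefactor—combine into a single finite limit equal to $\lambda_0$. Making this cancellation rigorous, and checking that further sub-clustering inside the main cluster contributes only $o(\delta^{(n+2)/(2n)})$, is the technical heart of the argument; it is precisely the Selberg evaluation of the limiting shape integral that makes the cancellation come out cleanly.
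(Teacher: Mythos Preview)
Your skeleton---pass to root coordinates, rescale to pull out $\delta^{(n+2)/(2n)}$, and control the shape integral by a Selberg comparison---is the same as the paper's. Two execution choices differ. First, the paper integrates over the leading coefficient $b=a_n$ \emph{first} (the height is exactly linear in $b$, so this step is trivial), obtaining $\int_{\RR^n}\sqrt{\Delta(\valpha)}\,\Psi(\valpha)^{n+1}\,d\valpha$ with $\Psi(\valpha)=\min\bigl((\delta/\Delta(\valpha))^{1/(2n-2)},\,h(p_{\valpha})^{-1}\bigr)$; you instead integrate over the scale $\tau$ first, which forces you to handle the nonlinear $\tau$-dependence of $h$ through the implicitly defined cap $T$. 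Second, the paper does \emph{not} rescale $a_n$ or the ``centre'': it uses the inversion symmetry $\alpha_i\mapsto\alpha_i^{-1}$ of $h$ (Lemma~\ref{lm-bounded-region}) to pin one root in $[-1,1]$, and then rescales only the displacements $\theta_i=(\alpha_{i+1}-\alpha_1)/\rho$ with $\rho^{n(n-1)}=\delta$. Thus $b$ and the centre stay in their natural ranges and the exact quantity $K(\tau,\ldots,\tau)=1/h((x-\tau)^n)$ survives in the limit, giving the explicit $\lambda_0$. Your rescaling $a_n=\delta^{1/(n-1)}\tilde a$, $c=\delta^{-1/(n(n-1))}\tilde c$ sends the bulk region ($a_n\asymp1$, $|c|\le1$) to infinity; making the limit rigorous then requires invoking the inversion property anyway, so nothing is gained over the paper's route.

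The more serious issue is your last paragraph. There is \emph{no} cancellation of ``formally divergent shape integrals against vanishing $\delta$-powers''. In the paper's variables the limit integrand is $\sqrt{\tDelta(\vtheta)}\min\bigl(\tDelta(\vtheta)^{-1/(2n-2)},K\bigr)^{n+1}$, and Lemma~\ref{lm-int-form} together with the Selberg bound (Lemma~\ref{lm-selberg}) shows directly that this is integrable over $\RR^{n-1}$; the $\min$ with the height term is exactly what tames the strata $\tDelta\to0$, uniformly in $\delta$. The two ``regimes'' you describe are simply the two pieces of a single convergent integral of $\min(f^{d_1},\kappa f^{d_2})$, not competing divergent terms. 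Once this uniform bound is in hand, the passage $\rho\to0$ is a routine dominated-convergence argument (Lemma~\ref{lm-int-below}), with no sub-clustering analysis required.
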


{\sc Remark.}
The corresponding results for $n=2$ and $n=3$ are obtained in \cite{GKK13} and \cite{KGK13}.

Theorem \ref{thm-small-discr} shows that $\lim_{x\to+0} \phi_n(x)=+\infty$.

Since $\#\mathcal{P}_n^{(s)}(Q,v)=N_s(Q,Q^{2n-2-2v})$, we get the following corollary.
\begin{cor}
Let $\epsilon\in(0,1)$ be a fixed small number. For all $v\in\left[\epsilon,(1-\epsilon)\frac{n}{n+2}\right]$ we have
\[
\#\mathcal{P}^{(0)}_n(Q,v) \sim \lambda_0\, Q^{n+1-\frac{n+2}{n} v},  \quad \text{as} \quad Q\to \infty,
\]
where $\lambda_0$ is the same as in Theorem \ref{thm-small-discr}.
\end{cor}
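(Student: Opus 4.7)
The plan is to combine Theorem \ref{thm-lim-func} (for $s=0$) with Theorem \ref{thm-small-discr} via the substitution $X = Q^{2n-2-2v}$. By the definitions, $\#\mathcal{P}_n^{(0)}(Q,v) = N_0(Q, Q^{2n-2-2v})$, so Theorem \ref{thm-lim-func} yields
\[
\#\mathcal{P}_n^{(0)}(Q,v) = Q^{n+1}\, f_0(Q^{-2v}) + O_n(Q^n).
\]
Since $v \ge \epsilon$, the argument $\delta := Q^{-2v}$ satisfies $\delta \le Q^{-2\epsilon}\to 0$ as $Q\to\infty$, uniformly in $v$ over the stated range. Theorem \ref{thm-small-discr} then gives $f_0(\delta) = \lambda_0\,\delta^{(n+2)/(2n)}(1+o(1))$, i.e.
\[
Q^{n+1}\,f_0(Q^{-2v}) = \lambda_0\, Q^{n+1 - \frac{n+2}{n}v}(1+o(1)),
\]
where the $o(1)$ depends only on the size of $\delta$ and is therefore uniform in $v \in [\epsilon,(1-\epsilon)\frac{n}{n+2}]$.

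Next, I would verify that the error term $O_n(Q^n)$ is negligible compared with the expected main term $\lambda_0\, Q^{n+1-(n+2)v/n}$. Their ratio is
\[
\frac{Q^n}{Q^{n+1-\frac{n+2}{n}v}} = Q^{\frac{n+2}{n}v - 1},
\]
and the upper bound $v \le (1-\epsilon)\frac{n}{n+2}$ forces $\frac{n+2}{n}v - 1 \le -\epsilon$, so the ratio is $O_n(Q^{-\epsilon}) = o(1)$. Combining the three displays produces the asserted asymptotic $\#\mathcal{P}_n^{(0)}(Q,v) \sim \lambda_0\, Q^{n+1-\frac{n+2}{n}v}$.

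There is no genuine obstacle here; the argument is essentially a substitution, and all the hard work has been absorbed into the two preceding theorems. The role of the two cutoffs is transparent: the lower cutoff $v\ge\epsilon$ is needed so that $\delta = Q^{-2v}\to 0$ and the asymptotic from Theorem \ref{thm-small-discr} can be invoked, while the upper cutoff $v \le (1-\epsilon)\frac{n}{n+2}$ is exactly what makes the error $O_n(Q^n)$ from Theorem \ref{thm-lim-func} strictly smaller than the main term. Outside this range the asymptotic degenerates and the method gives only one-sided information.
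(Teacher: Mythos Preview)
Your argument is correct and is exactly the substitution the paper has in mind; the paper itself does not spell out a proof beyond the one-line remark that $\#\mathcal{P}_n^{(s)}(Q,v)=N_s(Q,Q^{2n-2-2v})$, so your write-up is a faithful (and more detailed) rendering of the intended deduction from Theorems~\ref{thm-lim-func} and~\ref{thm-small-discr}.
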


\section{Auxiliary statements}\label{sec-aux}

\begin{lemma}[Davenport \cite{hD51}]\label{lm-int-p-num}
Let $\mathcal{D}\subset \mathbb{R}^d$ be a bounded region formed by points $(x_1,\dots,x_d)$ satisfying a finite collection of algebraic inequalities
\[
F_i(x_1,\dots,x_d)\ge 0, \qquad 1\le i\le k,
\]
where $F_i$ is a polynomial of degree $\deg F_i \le m$ with real coefficients.
Let
\[
\Lambda(\mathcal{D}) = \mathcal{D}\cap \mathbb{Z}^d.
\]
Then
\[
\left|\#\Lambda(\mathcal{D}) - \mes_d \mathcal{D}\right| \le C \max(\bar{V}, 1),
\]
where the constant $C$ depends only on $d$, $k$, $m$; the quantity $\bar{V}$ is the maximum of all $r$--dimensional measures of projections of $\mathcal{D}$ onto all the~coordinate subspaces obtained by making $d-r$ coordinates of points in $\mathcal{D}$ equal to zero, $r$ taking all values from $1$ to $d-1$, that is,
\[
\bar{V}(\mathcal{D}) := \max\limits_{1\le r < d}\left\{ \bar{V}_r(\mathcal{D}) \right\}, \quad
\bar{V}_r(\mathcal{D}) := \max\limits_{\substack{\mathcal{J}\subset\{1,\dots,d\} \\ \#\mathcal{J} = r}}\left\{ \mes_r \operatorname{Proj}_{\mathcal{J}} \mathcal{D} \right\},
\]
where $\operatorname{Proj}_{\mathcal{J}} \mathcal{D}$ is the orthogonal projection of $\mathcal{D}$ onto the coordinate subspace formed by coordinates with indices in $\mathcal{J}$.
\end{lemma}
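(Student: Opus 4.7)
The plan is to prove the lemma by induction on the dimension $d$, with the semi-algebraic hypothesis providing the uniform combinatorial control needed to close the induction with a constant depending only on $d,k,m$.

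For the base case $d=1$, the set $\mathcal{D}\subset\mathbb{R}$ is a bounded semi-algebraic subset of the real line whose boundary lies in the union of the real zero sets of $F_1,\ldots,F_k$. Hence $\mathcal{D}$ is a disjoint union of at most $km+1$ intervals, and the number of integers in each interval differs from its length by less than~$1$. This yields the asserted inequality with a constant depending only on $k$ and $m$ (and absorbed into the $\max(\bar V,1)\ge 1$ on the right-hand side).

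For the inductive step, fix $d\ge 2$ and assume the lemma in dimension $d-1$. For each integer $j$ set $\mathcal{D}_j=\{\vx\in\mathbb{R}^{d-1}:(\vx,j)\in\mathcal{D}\}$; this is again a bounded region cut out by $k$ polynomial inequalities of degree at most $m$ in the variables $x_1,\ldots,x_{d-1}$, so the inductive hypothesis applies to it. Every coordinate projection of $\mathcal{D}_j$ is contained in the corresponding coordinate projection of $\mathcal{D}$, hence $\bar V(\mathcal{D}_j)\le \bar V(\mathcal{D})$. From $\#\Lambda(\mathcal{D})=\sum_j \#\Lambda(\mathcal{D}_j)$ and the inductive estimate applied slicewise one obtains
\[
\Bigl|\#\Lambda(\mathcal{D})-\sum_j \mes_{d-1}\mathcal{D}_j\Bigr|
\le C'\cdot N_{\ne\emptyset}\cdot \max(\bar V,1),
\]
where $N_{\ne\emptyset}$ counts those $j$ with $\mathcal{D}_j\ne\emptyset$. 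Since $N_{\ne\emptyset}\le 1+\mes_1 \operatorname{Proj}_{\{d\}}\mathcal{D}\le 1+\bar V_1(\mathcal{D})\ll \max(\bar V,1)$, this contribution is $O_{d,k,m}(\max(\bar V,1))$.

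The main obstacle, and the only place where the semi-algebraic hypothesis is used in an essential way, is to compare the Riemann sum $\sum_j \mes_{d-1}\mathcal{D}_j$ with the exact integral $\mes_d\mathcal{D}=\int_{\mathbb{R}} g(t)\,dt$, where $g(t):=\mes_{d-1}\mathcal{D}_t$. The required estimate is $\operatorname{Var}(g)\le C''(d,k,m)\cdot \max_t g(t)$. The combinatorial type of the slice $\mathcal{D}_t$ can change only at finitely many critical values of $t$, namely real roots of the resultants and discriminants of $F_1,\ldots,F_k$ with respect to $x_1,\ldots,x_{d-1}$; Bezout-type bounds limit the number of such values by a function of $d,k,m$ alone. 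Between consecutive critical values $g$ is a smooth algebraic function of $t$, and the same resultant calculus bounds the number of zeros of $g'$, so $g$ is piecewise monotone with $O_{d,k,m}(1)$ pieces. Since $g(t)\le\mes_{d-1}\operatorname{Proj}_{\{1,\ldots,d-1\}}\mathcal{D}\le \bar V_{d-1}(\mathcal{D})\le\bar V$, we conclude $\operatorname{Var}(g)=O_{d,k,m}(\bar V)$, and the elementary inequality
\[
\Bigl|\sum_j g(j)-\int_{\mathbb{R}} g(t)\,dt\Bigr|\le \operatorname{Var}(g)
\]
furnishes the second error bound. Adding the two contributions closes the induction and produces the asserted constant $C=C(d,k,m)$.
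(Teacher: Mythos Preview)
The paper does not prove this lemma; it is quoted from Davenport's 1951 paper and used as a black box. So there is no ``paper's own proof'' to compare against, and I simply assess your argument.

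The induction scheme is the right one, but the first error term does not close as written. You obtain
\[
\Bigl|\#\Lambda(\mathcal{D})-\sum_j \mes_{d-1}\mathcal{D}_j\Bigr|\le C'\cdot N_{\ne\emptyset}\cdot \max(\bar V,1),
\]
then note $N_{\ne\emptyset}\ll \max(\bar V,1)$ and conclude the product is $O_{d,k,m}(\max(\bar V,1))$. That last step is a non sequitur: what you have proved is only $O(\max(\bar V,1)^2)$. For a concrete failure, take the diagonal tube $\mathcal{D}=\{(x,y,z):|x-z|\le 1,\ |y|\le 1,\ 0\le z\le Q\}$ in $\mathbb{R}^3$. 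Here $\bar V(\mathcal{D})\asymp Q$ and $N_{\ne\emptyset}\asymp Q$, while each slice $\mathcal{D}_j$ is a $2\times 2$ square with $\bar V(\mathcal{D}_j)=2$. Your crude replacement $\bar V(\mathcal{D}_j)\le\bar V(\mathcal{D})$ turns a genuine $O(Q)$ error into an $O(Q^2)$ bound, which is useless.

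The fix is to strengthen the inductive hypothesis and keep the individual $\bar V_r$ separate: prove
\[
\bigl|\#\Lambda(\mathcal{D})-\mes_d\mathcal{D}\bigr|\le \sum_{r=0}^{d-1}c_r\,\bar V_r(\mathcal{D})\qquad(\bar V_0:=1),
\]
with constants $c_r=c_r(d,k,m)$. In the step one uses, for each $\mathcal{J}\subset\{1,\dots,d-1\}$ with $|\mathcal{J}|=r$, the Fubini identity
\[
\int_{\mathbb{R}}\mes_r\bigl(\operatorname{Proj}_{\mathcal{J}}\mathcal{D}_t\bigr)\,dt=\mes_{r+1}\bigl(\operatorname{Proj}_{\mathcal{J}\cup\{d\}}\mathcal{D}\bigr)\le \bar V_{r+1}(\mathcal{D}),
\]
which replaces your pointwise bound and makes the induction close with a linear (not quadratic) dependence on the $\bar V_r$. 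This is essentially how Davenport organises the argument. With this bookkeeping, your second step (the bounded-variation comparison of $\sum_j g(j)$ with $\int g$) becomes unnecessary: writing $\sum_j\mes_{d-1}\mathcal{D}_j$ is not needed if instead one compares $\#\Lambda(\mathcal{D})$ directly with $\int_{\mathbb{R}}\#\Lambda(\mathcal{D}_t)\,dt$ via the one-dimensional interval count along fibres, and then applies the refined inductive hypothesis to the real-parameter slices $\mathcal{D}_t$ and to the projection $\operatorname{Proj}_{\{1,\dots,d-1\}}\mathcal{D}$.

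Two minor points. First, $N_{\ne\emptyset}\le 1+\mes_1\operatorname{Proj}_{\{d\}}\mathcal{D}$ presumes the projection is a single interval; in general it is a union of $O_{d,k,m}(1)$ intervals (Tarski--Seidenberg), so one should write $N_{\ne\emptyset}\le C''+\mes_1\operatorname{Proj}_{\{d\}}\mathcal{D}$. Second, your variation argument for $g(t)=\mes_{d-1}\mathcal{D}_t$ relies on $g$ being piecewise monotone with $O_{d,k,m}(1)$ pieces; this is true but the justification via ``resultant calculus bounds the number of zeros of $g'$'' is more delicate than you indicate, since $g$ is an integral rather than a polynomial.
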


\begin{lemma}\label{lm-int-below}
Let $f: \RR^{n+1}\to\RR$ be a integrable function such that for any compact set $S\subset\RR^n$
\[
\lim_{\delta\to 0} \sup_{\mathbf{x}\in S}|f(\delta,\mathbf{x})-f(0,\mathbf{x})| = 0.
\]
Let $|f(\delta,\vx)|\le g(\vx)$ for $|\delta|<\delta_0$ and all $\vx\in\RR^n$, and the integral
$\int_{\RR^n} g(\vx) d\vx$ converge.

Then the following limit has a finite value:
\[
\lim_{\delta\to 0}\int_{\RR^n} f(\delta,\vx) d\vx = \int_{\RR^n} f(0,\vx) d\vx.
\]
\end{lemma}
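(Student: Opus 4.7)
The plan is to mimic the proof of the Dominated Convergence Theorem, exploiting that the hypothesis of uniform convergence on compact sets is stronger than pointwise convergence on the compact set but weaker than uniformity on all of $\RR^n$, so we have to handle the tail separately using the dominating function $g$.

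\medskip

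Fix $\varepsilon>0$. Since $\int_{\RR^n} g(\vx)\, d\vx$ converges, I would first choose $R=R(\varepsilon)$ so large that
\[
\int_{\RR^n\setminus B_R} g(\vx)\, d\vx < \varepsilon,
\]
where $B_R$ is the closed ball of radius $R$ centered at the origin. The bound $|f(\delta,\vx)|\le g(\vx)$ valid for $|\delta|<\delta_0$ then guarantees that
\[
\left|\int_{\RR^n\setminus B_R} f(\delta,\vx)\, d\vx\right| < \varepsilon
\quad\text{for all } |\delta|<\delta_0,
\]
and, by letting $\delta\to 0$ pointwise, the same bound $|f(0,\vx)|\le g(\vx)$ holds, so $f(0,\cdot)$ is integrable and its tail integral is also bounded by $\varepsilon$. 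On the compact piece $B_R$, I would apply the uniform convergence hypothesis with $S=B_R$: for all sufficiently small $|\delta|$,
\[
\sup_{\vx\in B_R}|f(\delta,\vx)-f(0,\vx)| < \frac{\varepsilon}{\mes_n B_R},
\]
so that
\[
\left|\int_{B_R} f(\delta,\vx)\, d\vx - \int_{B_R} f(0,\vx)\, d\vx\right| < \varepsilon.
\]

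\medskip

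Splitting $\int_{\RR^n} = \int_{B_R} + \int_{\RR^n\setminus B_R}$ for both $f(\delta,\cdot)$ and $f(0,\cdot)$ and applying the triangle inequality then yields $\bigl|\int f(\delta,\vx)d\vx - \int f(0,\vx)d\vx\bigr| < 3\varepsilon$ for all sufficiently small $|\delta|$, which proves the claimed limit. There is not really a hard step here: the only mild subtlety is that the hypothesis does not make $f(0,\cdot)$ integrable a priori, but this is immediate by passing to the pointwise limit in the domination bound. Alternatively, one could note that uniform convergence on compacts implies pointwise convergence everywhere and invoke the classical dominated convergence theorem directly; the epsilon-splitting argument above is simply the self-contained version.
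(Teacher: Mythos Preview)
Your proof is correct and follows essentially the same approach as the paper's: split $\RR^n$ into a compact piece and its complement, control the tail via the dominating function $g$, and control the compact piece via the uniform convergence hypothesis. The paper's version is slightly more terse (it uses cubes $\{|x_i|\le R\}$ rather than balls and does not spell out the $\varepsilon$'s), but the argument is the same.
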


{\sc Remark.}
This lemma is merely an adjusted combination of classical results, which can be found in many calculus textbooks. We prove this statement here for the reader's convenience.

\begin{proof}
Evidently, the integral $\int_{\RR^n} f(\delta,\vx)\, d\vx$ converges for all $|\delta|\le \delta_0$ because $|f(\delta,\vx)|$ is bounded by $g(\vx)$.

Moreover, for any $R>0$ we have 
\[
\left|\int_{\RR^n} f(\delta,\vx) d\vx - \int_{\RR^n} f(0,\vx) d\vx\right| \le \int_{|x_i|\le R} \left| f(\delta,\vx) - f(0,\vx) \right| d\vx +
2 \int_{|x_i| > R} g(\vx) d\vx.
\]
Both integrals in the right-hand side can be made arbitrarily small by an appropriate choice $R$ and $\delta$.
Therefore, the lemma is proved.
\end{proof}

\begin{lemma}\label{lm-int-form}
Let $f:\RR^m\to\RR$ be a positive integrable function, and let for any $r>0$ and all $\vx\in\RR^m$
\[
f(r\vx) = r^c f(\vx),
\]
where $c$ is a positive constant.

Let $d_1$, $d_2$ be real numbers such that
\[
cd_1 < -m < cd_2.
\]
Let $D=\left\{(x_1,\dots,x_m)\in\RR^m : x_m > |x_i|\right\}$.
Let $\kappa>0$ be a constant.

Then
\begin{multline}\label{eq-lm2}
\int_D \min\left(f(\vx)^{d_1}, \kappa f(\vx)^{d_2}\right) d\vx =\\
=\kappa^{-\frac{m+cd_1}{c(d_2-d_1)}} \left(\frac{1}{m+cd_2}-\frac{1}{m+cd_1}\right)
\int_{[-1,1]^{m-1}} f(t_1,\dots,t_{m-1},1)^{-m/c}\,dt_1\dots dt_{m-1},
\end{multline}
and the left-hand-side integral converges if and only if the right-hand-side one does.
\end{lemma}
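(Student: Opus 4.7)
The plan is to reduce the $m$-dimensional integral to a one-dimensional radial integral times an $(m-1)$-dimensional boundary integral, using the homogeneity of $f$ and the ``cone'' shape of $D$. Concretely, I would introduce coordinates $(t_1,\dots,t_{m-1},x_m)$ via the substitution $x_i = t_i\,x_m$ for $i=1,\dots,m-1$ with $x_m$ kept; since $x_m > 0$ on $D$ and $x_m > |x_i|$ is equivalent to $|t_i|<1$, the region $D$ maps to $[-1,1]^{m-1} \times (0,\infty)$. The Jacobian matrix is triangular with diagonal entries $(x_m,\dots,x_m,1)$, hence the Jacobian determinant equals $x_m^{m-1}$. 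By homogeneity, $f(\vx)=x_m^{c}\,F(\vt)$ where $F(\vt):=f(t_1,\dots,t_{m-1},1)$, which is strictly positive.

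By Tonelli (the integrand is nonnegative) the integral becomes
\[
\int_{[-1,1]^{m-1}} \left(\int_0^\infty \min\bigl(x_m^{cd_1}F(\vt)^{d_1},\,\kappa\, x_m^{cd_2}F(\vt)^{d_2}\bigr)\,x_m^{m-1}\,dx_m\right)d\vt.
\]
For fixed $\vt$, set $a=F(\vt)^{d_1}$, $b=\kappa F(\vt)^{d_2}$, $p=cd_1$, $q=cd_2$; note $p<q$ since $d_1<d_2$. The curves $ax_m^p$ and $bx_m^q$ cross at $x_0=(a/b)^{1/(q-p)}$, with $bx_m^q$ smaller for $x_m<x_0$ and $ax_m^p$ smaller for $x_m>x_0$. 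Splitting the inner integral at $x_0$ gives
\[
b\int_0^{x_0}x_m^{m+q-1}\,dx_m + a\int_{x_0}^\infty x_m^{m+p-1}\,dx_m
= \bigl(ax_0^{m+p}\bigr)\!\left(\frac{1}{m+q}-\frac{1}{m+p}\right),
\]
using $ax_0^p=bx_0^q$ (hence $ax_0^{m+p}=bx_0^{m+q}$). Convergence at $0$ needs $m+q>0$ and at $\infty$ needs $m+p<0$, which is exactly the hypothesis $cd_1<-m<cd_2$.

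The remaining task is algebraic bookkeeping. Substituting $x_0=(a/b)^{1/(q-p)}$ yields $ax_0^{m+p}=a^{(m+q)/(q-p)}b^{-(m+p)/(q-p)}$. Plugging back $a=F^{d_1}$, $b=\kappa F^{d_2}$, $p=cd_1$, $q=cd_2$, the exponent of $F$ collapses via
\[
\frac{d_1(m+cd_2)-d_2(m+cd_1)}{c(d_2-d_1)}=\frac{m(d_1-d_2)}{c(d_2-d_1)}=-\frac{m}{c},
\]
while the $\kappa$-exponent becomes $-(m+cd_1)/(c(d_2-d_1))$. Therefore the inner integral equals
\[
\kappa^{-\frac{m+cd_1}{c(d_2-d_1)}}\left(\frac{1}{m+cd_2}-\frac{1}{m+cd_1}\right)F(\vt)^{-m/c},
\]
and integrating over $\vt\in[-1,1]^{m-1}$ produces the stated identity. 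The ``if and only if'' for convergence is immediate from Tonelli: both sides being finite is equivalent since the boundary integral is finite exactly when the left-hand side is.

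The only mild obstacle I anticipate is the exponent arithmetic collapsing cleanly to $-m/c$; once the substitution $x_i=t_i x_m$ is in place, every other step is a direct calculation dictated by the homogeneity degree and the two-regime structure of $\min(ax_m^p,bx_m^q)$.
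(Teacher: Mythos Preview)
Your argument is correct and follows essentially the same route as the paper: the same substitution $x_i=t_i x_m$ with Jacobian $x_m^{m-1}$, followed by splitting the radial integral at the crossover of the two branches of the $\min$. The only cosmetic difference is that the paper first treats $\kappa=1$ and then reduces the general case by rescaling $f$, whereas you carry $\kappa$ through the computation directly; your exponent bookkeeping (the collapse to $-m/c$ and the $\kappa$-power) is correct.
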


\begin{proof}
We start with proving \eqref{eq-lm2} for $\kappa=1$, that is, we obtain the formula
\begin{multline}\label{eq-lm2-K1}
\int_D \min\left(f(\vx)^{d_1}, f(\vx)^{d_2}\right) d\vx =\\
\left(\frac{1}{m+cd_2}-\frac{1}{m+cd_1}\right)
\int_{[-1,1]^{m-1}} f(t_1,\dots,t_{m-1},1)^{-m/c}\,dt_1\dots dt_{m-1}.
\end{multline}

Comparing the functions under the minimum in \eqref{eq-lm2-K1}, we split the left-hand-side integral into the sum of two integrals
\[
\int_D = \int_{D_1} f(\vx)^{d_2}d\vx + \int_{D_2} f(\vx)^{d_1}d\vx.
\]
where
\begin{align*}
D_1 &= \{\vx\in D : f(\vx)\le 1\}, \\
D_2 &= \{\vx\in D : f(\vx)> 1\}.
\end{align*}
The integral over $D_1$ will be denoted as $I_1$, the one over $D_2$ --- as $I_2$.

Let's change the variables: $x_m = r$, $x_i = r t_i$ for $1\le i < m$.
The Jacobian is equal to $r^{m-1}$. Then
\[
I_1 = \int_{\widetilde{D}_1} r^{m-1+cd_2} f(t_1,\dots,t_{m-1},1)^{d_2} \,dr\,dt_1\dots dt_{m-1},
\]
where
\[
\widetilde{D}_1 = \left\{(t_1,\dots,t_{m-1},r) \in \RR^m : |t_i|\le 1, \ 0\le r \le f(t_1,\dots,t_{m-1},1)^{-1/c} \right\}.
\]
Applying Fubini's theorem and integrating with respect to $r$, we get
\[
I_1 = \frac{1}{m+c d_2} \int_{|t_i|\le 1} f(t_1,\dots,t_{m-1},1)^{-m/c} \,dt_1\dots dt_{m-1}.
\]
For $I_2$ we get
\[
I_2 = \int_{\widetilde{D}_2} r^{m-1+cd_1} f(t_1,\dots,t_{m-1},1)^{d_1} \,dr\,dt_1\dots dt_{m-1},
\]
where
\[
\widetilde{D}_2 = \left\{(t_1,\dots,t_{m-1},r) \in \RR^m : |t_i|\le 1, \  r > f(t_1,\dots,t_{m-1},1)^{-1/c} \right\}.
\]
Note that from the lemma statement $m-1+c d_1 < -1$. So, we have
\[
I_2 = - \frac{1}{m+cd_1} \int_{|t_i|\le 1} f(t_1,\dots,t_{m-1},1)^{-m/c} \,dt_1\dots dt_{m-1}.
\]
So, \eqref{eq-lm2-K1} is proved.
Now, note that
\[
\min\left(f(\vx)^{d_1}, \kappa f(\vx)^{d_2}\right) = \kappa^{\frac{-d_1}{d_2-d_1}} \min\left(\kappa^{\frac{d_1}{d_2-d_1}} f(\vx)^{d_1}, \kappa^{\frac{d_2}{d_2-d_1}} f(\vx)^{d_2}\right).
\]
Hence, putting the function $\tilde{f}(\vx) := \kappa^{\frac{1}{d_2-d_1}} f(\vx)$ in \eqref{eq-lm2-K1} and multiplying the result by $\kappa^{\frac{-d_1}{d_2-d_1}}$ finish the proof of the lemma.
\end{proof}

\begin{lemma}\label{lm-jacob}
Let $n\ge 2$ be an integer.
Consider the change between the real variables
\[
(a_0,a_1,\dots,a_n) \quad \text{and} \quad
(b,z_1,\dots,z_n)
\]
given by the relation
\begin{equation}\label{eq-p-repr}
\sum_{k=1}^n a_k x^k = b\prod_{j=1}^n (x-z_j).
\end{equation}

Then the Jacobian of this change equals to:
\begin{equation}
\left| \frac{\partial (a_0,a_1,\dots,a_n)}{\partial (b;z_1,\dots,z_n)} \right| = b^n \!\!\prod_{1\le i<j\le n} |z_i - z_j|.
\end{equation}
\end{lemma}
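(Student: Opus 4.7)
The plan is to reduce the $(n+1)\times(n+1)$ Jacobian to an $n\times n$ determinant involving only the partial derivatives with respect to the roots, and then to evaluate that determinant by confronting the polynomial identity $\sum_{k=0}^n a_k x^k = b\prod_{j=1}^n(x-z_j)$ with itself at the points $x=z_i$.

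First, since $a_n=b$ does not depend on any $z_j$, the row of the Jacobian matrix corresponding to $a_n$ is $(1,0,\dots,0)$. Expanding along that row reduces the task to computing the $n\times n$ minor $J'=\bigl(\partial a_k/\partial z_j\bigr)_{0\le k\le n-1,\ 1\le j\le n}$.

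Next I would apply an evaluation-at-the-roots trick. Differentiating the product representation with respect to $z_j$ gives $\partial P/\partial z_j = -b\prod_{l\ne j}(x-z_l)$, while differentiating the coefficient expansion and using $\partial a_n/\partial z_j=0$ gives $\partial P/\partial z_j = \sum_{k=0}^{n-1}(\partial a_k/\partial z_j)\,x^k$. Evaluating both at $x=z_i$ yields $\sum_{k=0}^{n-1}(\partial a_k/\partial z_j)\,z_i^k = -b\prod_{l\ne j}(z_i-z_l)$, which vanishes for $i\ne j$ and equals $-P'(z_j)$ for $i=j$. Read as a matrix equation, this is $V\cdot J' = -\diag\bigl(P'(z_1),\dots,P'(z_n)\bigr)$, where $V=(z_i^k)_{1\le i\le n,\,0\le k\le n-1}$ is the Vandermonde matrix with $\det V=\prod_{i<j}(z_j-z_i)$.

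Finally, taking determinants and using $P'(z_j)=b\prod_{l\ne j}(z_j-z_l)$ gives $\prod_{j} P'(z_j)=b^n\prod_{l\ne j}(z_j-z_l)=b^n(-1)^{n(n-1)/2}\prod_{i<j}(z_j-z_i)^2$. One copy of the Vandermonde cancels $\det V$, leaving $|\det J'|=|b|^n\prod_{i<j}|z_i-z_j|$, which together with the reduction above is the claimed formula. The only delicate point is sign bookkeeping: the factors $(-1)^n$ and $(-1)^{n(n-1)/2}$ all disappear upon taking absolute value. The derivation is valid whenever the $z_j$ are distinct (so that $V$ is invertible); both sides vanish continuously at coincident roots, so no separate treatment of the degenerate locus is needed.
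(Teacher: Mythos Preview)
Your argument is correct and takes a genuinely different route from the paper's proof. The paper works directly with the elementary symmetric functions: writing $a_k=(-1)^{n-k}b\,\sigma_{n-k}(z_1,\dots,z_n)$, it obtains an $n\times n$ determinant whose $(k,j)$-entry is $\sigma_{k-1}$ of the remaining roots $\{z_1,\dots,z_n\}\setminus\{z_j\}$, and then reduces this determinant recursively via column operations based on the identity $\sigma_k[\mathcal A]=z_j\sigma_{k-1}[\mathcal A\setminus\{j\}]+\sigma_k[\mathcal A\setminus\{j\}]$, peeling off one Vandermonde factor $\prod_{j>1}(z_1-z_j)$ at each step. Your approach instead differentiates the identity $P(x)=b\prod_j(x-z_j)$ with respect to $z_j$ and evaluates at $x=z_i$, which packages the whole computation into the single matrix identity $V\cdot J'=-\diag\bigl(P'(z_1),\dots,P'(z_n)\bigr)$; the Vandermonde determinant then cancels in one shot against one of the two Vandermonde factors hidden in $\prod_j P'(z_j)$. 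Your method is shorter and more conceptual, while the paper's approach is more hands-on but has the mild advantage of yielding the exact sign $(-1)^{n(n+1)/2}$ without any bookkeeping.

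Two cosmetic points. In the line ``$\prod_j P'(z_j)=b^n\prod_{l\ne j}(z_j-z_l)=\dots$'' the middle expression is missing the outer $\prod_j$; the intended double product $\prod_j\prod_{l\ne j}(z_j-z_l)$ is what gives $(-1)^{n(n-1)/2}\prod_{i<j}(z_j-z_i)^2$. And your final $|b|^n$ is in fact the correct value of the absolute Jacobian; the paper's statement writes $b^n$, which should be read as $|b|^n$.
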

\begin{proof}
For a finite set $\mcA\subset\Bbb N$ let us denote
\[
\sigma_k[\mcA]:=\sum_{1\le j_1<\dots<j_k} z_{j_1}\dots z_{j_k}, \qquad \text{where} \quad z_j:=0 \quad \text{if} \quad j\not\in\mcA.
\]
Take by definition $\sigma_0[\mcA]:=1$ for any $\mcA\subset\Bbb N$ (including $\mcA=\varnothing$).

For any $j\in\Bbb N$ and $k\ge 1$ we have
\begin{equation}\label{eq-sigma-id}
\sigma_k[\mcA]=z_j \sigma_{k-1}[\mcA\setminus\{j\}]+ \sigma_k[\mcA\setminus\{j\}].
\end{equation}

The equality
\eqref{eq-p-repr}
is equivalent to
\[
a_k = (-1)^{n-k} b\,\sigma_{n-k}[\mcA], \qquad \text{with} \quad \mcA=\{1,2,\dots,n\}.
\]

With the help of \eqref{eq-sigma-id} one can express the determinant $J_0$ of the Jacobi matrix in the following way
\[
J_0 :=
\begin{vmatrix}
\pder{a_n}{b} & \pder{a_n}{z_1} & \dots & \pder{a_n}{z_n}\\
\vdots & \vdots & \cdots & \vdots\\
\pder{a_0}{b} & \pder{a_0}{z_1} & \dots & \pder{a_0}{z_n}\\
\end{vmatrix}
= (-1)^{\frac{n(n+1)}{2}}\, b^n J[\mcA],
\]
where
\[
J[\mcA]=\begin{vmatrix}
\sigma_0[\mcA\setminus\{1\}] & \sigma_0[\mcA\setminus\{2\}] & \dots & \sigma_0[\mcA\setminus\{n\}]\\
\sigma_1[\mcA\setminus\{1\}] & \sigma_1[\mcA\setminus\{2\}] & \dots & \sigma_1[\mcA\setminus\{n\}]\\
\vdots & \vdots & \ddots & \vdots\\
\sigma_{n-1}[\mcA\setminus\{1\}] & \sigma_{n-1}[\mcA\setminus\{2\}] & \dots & \sigma_{n-1}[\mcA\setminus\{n\}]\\
\end{vmatrix}.
\]
Subtracting the first column from the others and using \eqref{eq-sigma-id}, one can easily check that
\[
J[\mcA]=J[\mcA_{-1}] \prod_{j=2}^n (z_1-z_j),
\qquad \text{where} \quad \mcA_{-1}:=\mcA\setminus\{1\}.
\]
Repeating this reduction procedure, finally we obtain the last nontrivial determinant for $\mcA_{-(n-2)}:=\{n-1,n\}$
\[
J[\mcA_{-(n-2)}]=\begin{vmatrix}
\sigma_0[\mcA_{-(n-2)}\setminus\{n-1\}] & \sigma_0[\mcA_{-(n-2)}\setminus\{n\}]\\
\sigma_1[\mcA_{-(n-2)}\setminus\{n-1\}] & \sigma_1[\mcA_{-(n-2)}\setminus\{n\}]
\end{vmatrix}=
\begin{vmatrix}
1 & 1\\
z_n & z_{n-1}
\end{vmatrix}.
\]
Hence we have
\[
J_0=(-1)^{\frac{n(n+1)}{2}}\, b^n \prod_{1\le j<k\le n} (z_j-z_k).
\]
Taking the absolute value in the latter equation finishes the proof.
\end{proof}

\begin{lemma}\label{lm-cont-meas}
Let $X$ be a measurable set, and $g:X\to\Bbb R$ a measurable function.
Then the function $G:\Bbb R\to\Bbb R$ defined by
$G(a):=\mes\{x\in X: g(x)\le a\}$
is continuous if and only if
$\mes\{x\in X: g(x)=a\}=0$
for all real $a$.
\end{lemma}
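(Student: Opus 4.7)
The plan is to exploit the monotonicity of $G$ and reduce continuity at a point to computing the size of the jump there.

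First I would observe that the sublevel sets $A_a := \{x \in X : g(x) \le a\}$ are nested increasing in $a$, so $G$ is non-decreasing. Consequently $G$ admits one-sided limits $G(a^-)$ and $G(a^+)$ at every $a \in \mathbb{R}$, and continuity at $a$ reduces to verifying the chain of equalities $G(a^-) = G(a) = G(a^+)$.

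Next I would identify each one-sided limit as the measure of a concrete set via the standard continuity properties of measure. As $b \uparrow a$ the sets $A_b$ increase to $\{g < a\}$, so continuity of measure from below gives $G(a^-) = \operatorname{mes}\{g < a\}$. As $b \downarrow a$ the sets $A_b$ decrease to $A_a$, and continuity of measure from above gives $G(a^+) = G(a)$, so $G$ is automatically right-continuous and its only potential discontinuities are jumps from the left. Using the disjoint decomposition $A_a = \{g < a\} \sqcup \{g = a\}$ then expresses the jump as
\[
G(a) - G(a^-) = \operatorname{mes}\{x \in X : g(x) = a\},
\]
and both implications of the lemma follow by quantifying over $a \in \mathbb{R}$.

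The only technical point requiring care, and the main obstacle to a fully general argument, is that continuity of measure from above demands a nearby sublevel set $A_b$ with $b > a$ to have finite measure. In the applications of this lemma throughout the paper the ambient set has finite measure and this hypothesis is automatic, so I would simply flag the requirement rather than address it as a separate case.
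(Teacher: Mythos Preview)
Your argument is correct and matches the paper's proof essentially line for line: the paper simply asserts $\operatorname{mes}\{g=a\}=G(a)-G(a-0)$ and $G(a+0)=G(a)$, which are precisely your computation of the left jump and your observation of right-continuity. Your version is more explicit, and you rightly flag the finiteness hypothesis needed for continuity of measure from above---a point the paper passes over silently (and which is harmless in its applications, where the ambient set $\{h(\mathbf{a})\le 1\}$ has finite measure).
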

\begin{proof}
Obviously, $\mes\{x\in X: g(x)=a\}=G(a)-G(a-0)$.
In addition, $G(a+0)=\lim_{\epsilon\to+0} G(a+\epsilon)=G(a)$.
The lemma follows.
\end{proof}

\section{Counting integer polynomials}\label{sec-count}

In this section we reduce counting integer polynomials to evaluating of volumes of some regions.

\subsection{Height functions}\label{sbs-height}

There are a number of height functions defined on real polynomials, e.g. naive height, Mahler measure, length etc. Our result for each of them can be obtained in the same way.
So, we summarize their essential properties in a general notion of a height function and prove our theorem in a general form.

A continuous function $h:\RR^{n+1}\to[0,+\infty)$ satisfying for all $\vv=(v_n,\dots,v_1,v_0)\in\RR^{n+1}$ the properties
\begin{enumerate}
\item $h(t\vv)=|t|h(\vv)$ for all real $t$; \label{pr-hom}

\item $h(\vv)=0$ if and only if $\vv=\mathbf{0}$;

\item $h(v_0,v_1,\dots,v_n)=h(v_n,\dots,v_1,v_0)$; \label{pr-inv}

\item $h(v_0,-v_1,\dots,(-1)^k v_k,\dots,(-1)^n v_n) = h(v_0,v_1,\dots,v_k,\dots,v_n)$, \label{pr-minus}
\end{enumerate}
will be called a \emph{height function}.

Note that in terms of polynomials the property \ref{pr-inv} is equivalent to $h(x^n P(x^{-1}))=h(P(x))$.
The property \ref{pr-minus} is equivalent to $h(P(-x))=h(P(x))$.

We use the same notation for the height of a vector and of a polynomial.

\begin{lemma}\label{lm-h-bound}
For any $\vv\in\RR^{n+1}\setminus (-1,1)^{n+1}$ we have
$h(\vv)\ge h_0$,
where $h_0$ is a positive constant depending only on the height function $h$ and the parameter $n$.
\end{lemma}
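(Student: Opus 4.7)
The plan is to exploit the homogeneity property \ref{pr-hom} together with continuity of $h$ and compactness of the sup-norm unit sphere.

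First I would consider the set
\[
S := \{\vv \in \RR^{n+1} : \max_{0\le k\le n}|v_k| = 1\},
\]
i.e.\ the unit sphere in the $\ell^\infty$ norm. This set is closed and bounded, hence compact, and it does not contain the origin. Since $h$ is continuous and, by property (2), strictly positive away from $\mathbf{0}$, the restriction $h|_S$ attains its minimum on $S$ at some $\vv^* \in S$, and this minimum value
\[
h_0 := h(\vv^*) = \min_{\vv \in S} h(\vv)
\]
is strictly positive. Note $h_0$ depends only on $h$ and $n$.

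Next I would reduce an arbitrary $\vv \notin (-1,1)^{n+1}$ to a point of $S$ by scaling. For such a $\vv$, we have $t := \max_k |v_k| \ge 1$, so $\vv' := \vv/t \in S$. Applying property \ref{pr-hom} with the scalar $t$ gives
\[
h(\vv) = h(t\vv') = t\, h(\vv') \ge 1 \cdot h_0 = h_0,
\]
which is precisely the desired inequality.

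I do not expect any genuine obstacle here; the only subtle ingredient is that property (2) promotes continuity of $h$ on the compact set $S$ to a \emph{uniform} positive lower bound, which is exactly what compactness provides. Properties \ref{pr-inv} and \ref{pr-minus} play no role in this particular lemma, and the argument works verbatim for naive height, Mahler measure, or length.
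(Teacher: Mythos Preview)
Your argument is correct and essentially identical to the paper's proof: both take the minimum of $h$ over the compact $\ell^\infty$ unit sphere (using continuity and property~(2) to ensure it is positive) and then scale via property~\ref{pr-hom}, using that $\|\vv\|_\infty\ge 1$ outside $(-1,1)^{n+1}$. Your remark that properties~\ref{pr-inv} and~\ref{pr-minus} are not needed here is also accurate.
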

\begin{proof}
The set $S_0:=\{\vv\in\RR^{n+1}: \|\vv\|_\infty = 1\}$ is compact. From the extreme value theorem we have that there exists a value $h_0$ such that $h(\vv)\ge h_0$ for all $\vv\in S_0$.

Now, noticing that $h(\vv)=\|\vv\|_\infty h(\vv_0)$, where $\vv_0:=\|\vv\|_\infty^{-1} \vv\in S_0$, and $\|\vv\|_\infty \ge 1$ for all $\vv\in\RR^{n+1}\setminus(-1,1)^{n+1}$, we have the lemma.
\end{proof}

\subsection{Counting integer points via volume}
We represent real polynomials of degree~$n$ by their vectors of coefficients in $\RR^{n+1}$.
So, to calculate $N_s(Q,X)$, we need to count integral points in the region
\[
\mathcal{D}_s:=\left\{\va\in\RR^{n+1}: \sum_{i=0}^n a_j x^j \text{ has $s$ pairs of complex roots},\ h(\va)\le Q, \ |D(\va)|\le X\right\},
\]
where $D(\va)$ is the discriminant as a function of the coefficients $\va$ of the polynomial $\sum_{i=0}^n a_j x^j$. Note that $D(a_0,\dots,a_n)$ is a homogeneous polynomial of degree $2n-2$ in variables $a_0,\dots,a_n$.

From Lemma \ref{lm-int-p-num} we have
\[
\left|\# (\mathcal{D}_s \cap \ZZ^{n+1}) - Q^{n+1} \mes_{n+1}\widetilde{\mathcal{D}}_s\left(\frac{X}{Q^{2n-2}}\right)\right| \le c\,Q^n,
\]
where $c$ is a constant depending on $n$ only, and
\[
\widetilde{\mathcal{D}}_s(\delta):=\left\{\va\in\RR^{n+1}: \sum_{i=0}^n a_j x^j \text{ has $s$ pairs of complex roots},\ h(\va)\le 1, \ |D(\va)|\le \delta\right\}.
\]
Obviously, the function $f_s(\delta)$ in Theorem \ref{thm-lim-func} must equal to $\mes_{n+1}\widetilde{\mathcal{D}}_s(\delta)$:
\[
f_s(\delta):=\mes_{n+1}\widetilde{\mathcal{D}}_s(\delta).
\]

Since $\mes_{n+1}\varnothing = 0$, it is natural to set $f_s(\delta):=0$ for negative $\delta$.
Now note that for all real $X$
\[
\mes_{n+1}\left\{\va\in\RR^{n+1}: h(\va)\le 1, \ D(\va)=X\right\}=0
\]
because $D(\va)$ is a nonconstant polynomial in the variables $\va$.
Applying Lemma \ref{lm-cont-meas} we see that $f_s(\delta)$ is a continuous function, and
$f(0)=\lim_{\delta\to 0} f(\delta)=0$.
The monotonicity of $f_s$ is obvious.

\section{Proof of Theorem \ref{thm-small-discr}}\label{sec-real}

In this section we find the asymptotics of $f_0(\delta)=\mes_{n+1}\widetilde{\mathcal{D}}_0(\delta)$ as $\delta\to +0$.

By definition
\[
\mes_{n+1}\widetilde{\mathcal{D}}_0(\delta) = \int_{\widetilde{\mathcal{D}}_0(\delta)} da_0\, da_1\dots da_n.
\]
Changing the variables $(a_0,\dots,a_n)$ in this integral to $(b;\alpha_1,\alpha_2,\dots,\alpha_n)$, where $b=a_n$, and $\alpha_i$'s are roots of $\sum_{j=0}^n a_j x^j$ (see Lemma \ref{lm-jacob}), we obtain
\[
\mes_{n+1} \widetilde{\mathcal{D}}_0(\delta)=\frac{1}{n!} \int_{B_\delta} b^n \sqrt{\Delta(\valpha)}\, db\, d\valpha,
\]
where for $\valpha=(\alpha_1,\dots, \alpha_n)\in\RR^n$
\[
\Delta(\valpha) = \prod_{1\le i<j\le n} (\alpha_i - \alpha_j)^2,
\]
and the region $B_\delta \subset\RR^{n+1}$ is defined by
\begin{equation}\label{eq-B-def}
B_\delta := \left\{
(b;\valpha)\in\RR^{n+1} :
|b| h(p_{\valpha}) \le 1,\
b^{2n-2} \Delta(\valpha) \le \delta
\right\},
\qquad p_{\valpha}(x)=\prod_{i=1}^n (x-\alpha_i).
\end{equation}
Here we took into account that the Jacobian of this variable change equals to $b^n \sqrt{\Delta(\valpha)}$.
The factor $(n!)^{-1}$ arises because of the symmetry of the roots $\alpha_1,\dots,\alpha_n$.

Denote $K(\valpha):=(h(p_{\valpha}))^{-1}$.
The two restrictions on $b$ in \eqref{eq-B-def} can be joined in the single inequality $|b|\le \Psi(\valpha)$, where
\[
\Psi(\valpha) = \min\left\{ \left(\frac{\delta}{\Delta(\valpha)}\right)^{\frac{1}{2n-2}}, \, K(\valpha)\right\}.
\]

We need to evaluate the integral
\[
J = \int_{B_\delta} b^n \sqrt{\Delta(\valpha)}\, db\, d\valpha = \frac{2}{n+1} \int_{\RR^n} \sqrt{\Delta(\valpha)}\, \Psi(\valpha)^{n+1} d\valpha.
\]
Note that both these integrals converge since
$\mes_{n+1}\mathcal{D}_0(\delta)\le \mes_{n+1} \left\{\va\in\RR^{n+1}: h(\va)\le 1\right\}$.

To ensure the correctness of the transformations below, we restrict the range of one of the roots $\alpha_i$.
\begin{lemma}\label{lm-bounded-region}
\begin{equation}\label{eq-restrain}
\int_{\RR^n} \sqrt{\Delta(\valpha)}\, \Psi(\valpha)^{n+1} d\valpha = 2\int_{[-1,1]\times\RR^{n-1}} \sqrt{\Delta(\valpha)}\, \Psi(\valpha)^{n+1} d\valpha.
\end{equation}
\end{lemma}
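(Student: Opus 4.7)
The plan is to exploit the symmetry of the integrand under the \emph{reciprocal roots} involution $T\colon\valpha\mapsto\vbeta$ defined by $\beta_i=1/\alpha_i$, which is a diffeomorphism off the measure-zero coordinate hyperplanes. The key observation is that the measure $\sqrt{\Delta(\valpha)}\,\Psi(\valpha)^{n+1}\,d\valpha$ is $T$-invariant, while $T$ bijectively maps the slab $\{|\alpha_1|>1\}$ onto $\{|\beta_1|<1\}$, which up to a null set equals $[-1,1]\times\RR^{n-1}$.

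The first step is to verify the invariance by tracking each factor. The Jacobian of $T$ is $\prod_i|\beta_i|^{-2}$. From $\alpha_i-\alpha_j=(\beta_j-\beta_i)/(\beta_i\beta_j)$ we obtain $\sqrt{\Delta(\valpha)}=\sqrt{\Delta(\vbeta)}\prod_i|\beta_i|^{-(n-1)}$. For the height, the reversed-polynomial identity
\[
x^n p_\valpha(1/x)=\prod_{i=1}^n(1-\alpha_i x)=(-1)^n\Bigl(\prod_i\alpha_i\Bigr)p_\vbeta(x),
\]
together with properties \ref{pr-hom} and \ref{pr-inv} of the height function, gives $h(p_\valpha)=|\prod_i\alpha_i|\,h(p_\vbeta)$, hence $K(\valpha)=|\prod_i\beta_i|\,K(\vbeta)$, and similarly $(\delta/\Delta(\valpha))^{1/(2n-2)}=|\prod_i\beta_i|\,(\delta/\Delta(\vbeta))^{1/(2n-2)}$. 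Since both terms under the minimum transform by the same factor, $\Psi(\valpha)=|\prod_i\beta_i|\,\Psi(\vbeta)$. Collecting exponents, $-(n-1)+(n+1)-2=0$, so the $|\beta_i|$-factors cancel and
\[
\sqrt{\Delta(\valpha)}\,\Psi(\valpha)^{n+1}\,|d\valpha| = \sqrt{\Delta(\vbeta)}\,\Psi(\vbeta)^{n+1}\,|d\vbeta|.
\]

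The second step is to split $\RR^n=\{|\alpha_1|\le 1\}\sqcup\{|\alpha_1|>1\}$ (up to a null set) and to apply $T$ to the second piece. Since $|1/\beta_1|>1$ is equivalent to $|\beta_1|<1$, the invariance identity yields
\[
\int_{|\alpha_1|>1}\sqrt{\Delta(\valpha)}\,\Psi(\valpha)^{n+1}\,d\valpha = \int_{[-1,1]\times\RR^{n-1}}\sqrt{\Delta(\vbeta)}\,\Psi(\vbeta)^{n+1}\,d\vbeta,
\]
and combining with the contribution from the first piece produces the stated factor $2$.

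The main obstacle is the bookkeeping of the $|\beta_i|$-exponents; they balance precisely because the power $n+1$ on $\Psi$ has been chosen with this cancellation in mind. Convergence of the integral, already noted in the text, justifies all manipulations, and the null set $\{\alpha_1\cdots\alpha_n=0\}$ may be ignored throughout.
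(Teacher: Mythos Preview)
Your proof is correct and follows essentially the same route as the paper: both use the reciprocal-roots change of variables $\alpha_i=1/\beta_i$, compute the Jacobian $\prod_i\beta_i^{-2}$, check the transformation laws $\Delta(\valpha)=\Delta(\vbeta)\prod_i\beta_i^{-(2n-2)}$ and $K(\valpha)=|\prod_i\beta_i|\,K(\vbeta)$ (the latter via height properties \ref{pr-hom} and \ref{pr-inv}), and conclude that $\Psi(\valpha)=|\prod_i\beta_i|\,\Psi(\vbeta)$ so that the integrand times $|d\valpha|$ is invariant. Your explicit exponent count $-(n-1)+(n+1)-2=0$ makes the cancellation more transparent than the paper's presentation, but the argument is the same.
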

\begin{proof}
Let us make the change of variables
$\alpha_i = \beta_i^{-1}$ in the latter integral.
For this change the Jacobian equals
\begin{equation}\label{eq-jac-lemma}
\det \left(\frac{\partial\alpha_i}{\partial\beta_j}\right)_{i,j=1}^n = \left(\prod_{k=1}^n \beta_k\right)^{-2}.
\end{equation}

We have
\begin{align*}
\Delta(\valpha) &= \prod_{1\le i<j\le n} \left(\frac{1}{\beta_i} - \frac{1}{\beta_j}\right)^2 = \left(\prod_{k=1}^n \beta_k\right)^{-(2n-2)} \Delta(\vbeta),\\
\sigma_k(\valpha) &= \left(\prod_{k=1}^n \beta_k\right)^{-1} \sigma_{n-k}(\vbeta).
\end{align*}
So, from the properties of the height function, we have $K(\valpha)=K(\vbeta) \left|\prod_{k=1}^n \beta_k\right|$, and thus,
\[
\Psi(\valpha) = \Psi(\vbeta) \left|\prod_{k=1}^n \beta_k\right|.
\]
Now, we have
\[
\int_{[-1,1]\times\RR^{n-1}} \sqrt{\Delta(\valpha)}\, \Psi(\valpha)^{n+1} d\alpha_1\dots d\alpha_n =
\int_{(\RR\setminus[-1,1])\times\RR^{n-1}} \sqrt{\Delta(\vbeta)}\, \Psi(\vbeta)^{n+1} d\beta_1\dots d\beta_n.
\]
The lemma is proved.
\end{proof}

Using this lemma, we get
\[
J = \frac{4}{n+1} I_1, \qquad I_1 := \int_{[-1,1]\times\RR^{n-1}} \sqrt{\Delta(\valpha)}\, \Psi(\valpha)^{n+1} d\valpha.
\]
Now, make one more change of variables with a parameter $\rho$:
\begin{equation}\label{eq-center-ch}
\begin{aligned}
\alpha_1 &= \tau,\\
\alpha_i &= \tau + \rho\,\theta_{i-1}, \qquad 2\le i\le n.
\end{aligned}
\end{equation}
The Jacobian of this change equals $\rho^{n-1}$. The parameter $\rho$ will be specified later.
\[
\Delta(\valpha) = \rho^{n(n-1)} \prod_{k=1}^{n-1} \theta_k^2 \prod_{1\le i<j\le n-1} (\theta_i-\theta_j)^2.
\]
For $\vtheta=(\theta_1,\dots,\theta_{n-1})$ denote
\[
\tDelta(\vtheta) := \prod_{k=1}^{n-1} \theta_k^2 \prod_{1\le i<j\le n-1} (\theta_i-\theta_j)^2,
\]
so that $\Delta(\valpha) = \rho^{n(n-1)} \tDelta(\vtheta)$.

Putting the new variables into $\Psi(\valpha)$, we see that $\rho^{n(n-1)}=\delta$ is the best choice for $\rho$. So, we have
\[
\Psi(\valpha) = \tPsi(\rho;\tau,\vtheta),
\]
where
\[
\tPsi(\rho;\tau,\vtheta) := \min\left\{ \tDelta(\vtheta)^{\frac{-1}{2n-2}}, \, K(\vtau+\rho\,\hat{\vtheta})\right\},
\]
where $\vtau:=(\tau,\dots,\tau)\in\RR^n$, $\hat{\vtheta}:=(0,\theta_1,\dots,\theta_{n-1})\in\RR^n$.

Now, we have
\begin{multline*}
I_1 = \int_{-1}^1 d\tau\int_{\RR^{n-1}} \rho^{\frac{n(n-1)}{2}} \sqrt{\tDelta(\vtheta)}\, \tPsi(\rho;\tau,\vtheta)^{n+1} \rho^{n-1} d\theta_1\dots d\theta_{n-1} =\\
= \rho^{\frac{(n-1)(n+2)}{2}} \int_{-1}^1 d\tau\int_{\RR^{n-1}} \sqrt{\tDelta(\vtheta)}\, \tPsi(\rho;\tau,\vtheta)^{n+1} d\theta_1\dots d\theta_{n-1}.
\end{multline*}

Now the question is: does the following integral have a finite limit (as $\rho$ vanishes) or not?
And if it does, then what value does it have.
\[
\tI_1(\rho) = \int_{-1}^1 d\tau\int_{\RR^{n-1}} \sqrt{\tDelta(\vtheta)}\, \tPsi(\rho;\tau,\vtheta)^{n+1} d\theta_1\dots d\theta_{n-1}.
\]

Let us denote
\begin{align}
I_2(\rho,\tau)&:=\int_{\RR^{n-1}} \sqrt{\tDelta(\vtheta)}\, \tPsi(\rho;\tau,\vtheta)^{n+1} d\theta_1\dots d\theta_{n-1},\\
I_3(K)&:=\int_{\RR^{n-1}} \sqrt{\tDelta(\vtheta)}\, \min\left\{K, \tDelta(\vtheta)^{\frac{-1}{2n-2}}\right\}^{n+1} d\theta_1\dots d\theta_{n-1}. \label{eq-int}
\end{align}

For any $\rho$, $\tau$, $\vtheta$ we have $K(\vtau+\rho\,\hat{\vtheta})\le K_0$,
where $K_0:=\sup_{\valpha\in\RR^n} K(\valpha)$. The value $K_0$ is finite because of Lemma \ref{lm-h-bound}.
Therefore,
\begin{equation}\label{eq-unif-bnd}
I_2(\rho,\tau)\le I_3(K_0).
\end{equation}

\begin{lemma}\label{lm-selberg}
The integral $I_3(K)$ converges and can be written in the form
\begin{equation*}
I_3(K) = \frac{n(n+1)}{(n+2)}\,K^{2/n} \!\!\int\limits_{[-1,1]^{n-2}} \left(\prod_{i=1}^{n-2} |\xi_i| \,(1-\xi_i)
\!\!\prod_{1 \le i < j \le n-2} |\xi_i - \xi_j | \right)^{-2/n}
\!\!d\xi_1 \cdots d\xi_{n-2}.
\end{equation*}
\end{lemma}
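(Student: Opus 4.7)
My plan is to match the integrand of $I_3(K)$ to the template of Lemma \ref{lm-int-form} after an algebraic simplification and a symmetry reduction, then read off the stated formula. First, by absorbing $\sqrt{\tDelta}$ into the minimum and using the identity $\tfrac{1}{2}-\tfrac{n+1}{2n-2}=-\tfrac{1}{n-1}$, rewrite the integrand as
\[\sqrt{\tDelta(\vtheta)}\,\min\{K,\tDelta^{-1/(2n-2)}\}^{n+1}=\min\{K^{n+1}\tDelta^{1/2},\ \tDelta^{-1/(n-1)}\}.\]
This is precisely $\min\{f^{d_1},\kappa f^{d_2}\}$ with $f=\tDelta$, $d_1=-1/(n-1)$, $d_2=1/2$, and $\kappa=K^{n+1}$.

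Second, I would reduce the domain from $\RR^{n-1}$ to the cone $D=\{\theta_{n-1}>|\theta_i|,\ i<n-1\}$ required by Lemma \ref{lm-int-form}. Two invariances of $\tDelta$ are available: full permutation symmetry in $\theta_1,\dots,\theta_{n-1}$, and invariance under the \emph{global} sign reversal $\vtheta\mapsto-\vtheta$ (each $\theta_k^2$ and each $(\theta_i-\theta_j)^2$ is preserved; note that flipping an individual $\theta_i$ fails to preserve the Vandermonde factor). Combining these yields the appropriate symmetry factor relating $\int_{\RR^{n-1}}$ to $\int_D$, after which the integrand is exactly in the form required by the lemma.

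Third, apply Lemma \ref{lm-int-form} with $m=n-1$ and homogeneity degree $c=n(n-1)$ (from $2(n-1)$ for $\prod\theta_k^2$ plus $2\binom{n-1}{2}$ for $\prod(\theta_i-\theta_j)^2$). The convergence condition $cd_1<-m<cd_2$ reads $-n<-(n-1)<n(n-1)/2$ and holds for $n\ge 3$. Mechanical bookkeeping gives $m+cd_1=-1$, $m+cd_2=(n-1)(n+2)/2$, bracket $\tfrac{n(n+1)}{(n-1)(n+2)}$, $\kappa$-exponent $\tfrac{2}{n(n+1)}$ producing the prefactor $K^{2/n}$, and inner exponent $-m/c=-1/n$. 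Substituting $\theta_{n-1}=1$ into $\tDelta$ and expanding
\[\tDelta(\xi_1,\dots,\xi_{n-2},1)=\prod_{i=1}^{n-2}\xi_i^2(1-\xi_i)^2\prod_{1\le i<j\le n-2}(\xi_i-\xi_j)^2\]
then yields the stated product raised to the power $-2/n$.

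The main obstacle I anticipate is the symmetry reduction in step two: the integrand is not invariant under sign flips of individual variables, so one must identify the combined flip $\vtheta\mapsto-\vtheta$ as the correct symmetry and carefully count how many copies of $D$ tile $\RR^{n-1}$ modulo null sets. Convergence of the Selberg-type integral on the right is not an obstacle: the exponent $2/n$ is strictly less than $1$ for $n\ge 3$, so each singular factor is locally integrable, and Lemma \ref{lm-int-form} transfers this convergence back to $I_3(K)$.
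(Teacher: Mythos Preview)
Your approach is essentially the paper's: rewrite the integrand as $\min\{f^{d_1},\kappa f^{d_2}\}$ with $f=\tDelta$, $d_1=-\tfrac{1}{n-1}$, $d_2=\tfrac12$, $\kappa=K^{n+1}$, pass from $\RR^{n-1}$ to the cone $D$ by symmetry, and apply Lemma~\ref{lm-int-form} with $m=n-1$, $c=n(n-1)$. Your bookkeeping ($m+cd_1=-1$, $m+cd_2=\tfrac{(n-1)(n+2)}{2}$, bracket $\tfrac{n(n+1)}{(n-1)(n+2)}$, $\kappa$-exponent $\tfrac{2}{n(n+1)}$, inner exponent $-m/c=-1/n$) matches the paper exactly. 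The paper is terser on the symmetry step---it simply says ``we must multiply the result by $m$''---so your explicit identification of permutation symmetry together with the global reflection $\vtheta\mapsto-\vtheta$ is the same reduction spelled out; you are right that this counting is the delicate combinatorial step.

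The one genuine gap is your convergence argument. The claim ``$2/n<1$, so each singular factor is locally integrable'' does not suffice: near a point where several $\xi_i$ coalesce, many factors $|\xi_i-\xi_j|^{-2/n}$ blow up simultaneously, and local integrability of each factor separately says nothing about the product. The paper handles this by bounding the $(n-2)$-fold integral $I_4$ by (a constant times) the Selberg integral $S_{n-2}\!\left(\tfrac{n-2}{n},\tfrac{n-2}{n},-\tfrac{1}{n}\right)$ and then verifying the classical convergence condition
\[
-\tfrac{1}{n}\;>\;-\min\!\left\{\tfrac{1}{n-2},\ \tfrac{n-2}{n(n-3)}\right\}.
\]
This is precisely the tool designed to control the confluent singularities, and it is the piece your outline is missing. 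With that addition, your proof coincides with the paper's.
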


\begin{proof}
The function $\tDelta(\vtheta)$ is a symmetric homogeneous polynomial of total degree $c=n(n-1)$. It is not difficult to see that the integral \eqref{eq-int} meets the conditions of Lemma \ref{lm-int-form} with $m=n-1$, $d_1=-\frac{1}{n-1}$ and $d_2=\frac{1}{2}$, and $\kappa=K^{n+1}$. Applying Lemma \ref{lm-int-form} (with $cd_2-cd_1=\frac{n(n+1)}{2}$, $m+cd_1=-1$, and $m+cd_2=\frac{(n-1)(n+2)}{2}$; note, we must multiply the result by $m$),
we see that
\begin{equation}\label{eq-int1}
I_2 = \frac{n(n+1)}{(n+2)}\,K^{2/n} \int_{[-1,1]^{n-2}} \tDelta(\xi_1,\dots,\xi_{n-2},1)^{-1/n} d\xi_1\dots d\xi_{n-2},
\end{equation}
and, assuming that the integral \eqref{eq-int} converges,
the latter integral must converge too.
In its expanded form, this integral can written as
\begin{equation}\label{eq-int2}
I_4 = \int\limits_{[-1,1]^{n-2}} \prod_{i=1}^{n-2} |\xi_i|^{-2/n}(1-\xi_i)^{-2/n}
\prod_{1 \le i < j \le n-2} |\xi_i - \xi_j |^{-2/n}\,d\xi_1 \cdots d\xi_{n-2}.
\end{equation}

Now, we are going to ``selbergate'' (named after ``[Atle] Selberg + [integr]ate''), that is, to reduce the solution of the problem to the famous Selberg integral \cite{Sel1944}:
\begin{equation*}
S_m (\alpha, \beta, \gamma) =
\int_0^1 \cdots \int_0^1 \prod_{i=1}^m t_i^{\alpha-1}(1-t_i)^{\beta-1}
\prod_{1 \le i < j \le m} |t_i - t_j |^{2 \gamma}\,dt_1 \cdots dt_m.
\end{equation*}
For any integer $m\ge 1$, the Selberg integral converges for
\begin{equation}\label{eq-Sel-conv-c}
\Re(\alpha) > 0, \ \Re(\beta) > 0, \ \Re(\gamma) > -\min\left\{\frac{1}{m}, \frac{\Re(\alpha)}{m - 1}, \frac{\Re(\beta)}{m - 1}\right\}.
\end{equation}

If one takes a look at the integral \eqref{eq-int2}, they will see that
\[
I_4 \le 2^{n-2} S_{n-2}\left(\frac{n-2}{n},\, \frac{n-2}{n},\, -\frac{1}{n}\right).
\]
Obviously, the convergence conditions \eqref{eq-Sel-conv-c} are satisfied:
\[
-\frac{1}{n} > -\min\left\{\frac{1}{n-2},\ \frac{n-2}{n(n - 3)}\right\}.
\]
Hence $S_{n-2}\left(\frac{n-2}{n},\, \frac{n-2}{n},\, -\frac{1}{n}\right)$ converges, and so does $I_4$. Thus, $I_3(K)$ converges too.
\end{proof}

On the other hand, Lemma \ref{lm-int-below} tells us that
\[
\lim_{\rho\to 0} I_2(\rho,\tau)=I_2(0,\tau)=I_3(\tK(\tau)),
\]
where $\tK(\tau):=K(\vtau)$.
This convergence is uniform for $\tau\in[-1,1]$ because of \eqref{eq-unif-bnd}. Thus,
\begin{multline*}
\tI_1(0)=\lim_{\rho\to 0} \tI_1(\rho) = \int_{-1}^1 I_3(\tK(\tau))\,d\tau=\\
=\frac{n(n+1)}{(n+2)} \int_{-1}^1 \tK(\tau)^{2/n} d\tau \int_{[-1,1]^{n-2}} \tDelta(\xi_1,\dots,\xi_{n-2},1)^{-1/n} d\xi_1\dots d\xi_{n-2}.
\end{multline*}
And finally, we see that
\[
\lambda_0=\frac{4}{(n+1)!}\, \tI_1(0).
\]
Theorem \ref{thm-small-discr} is proved.

\bibliographystyle{abbrv}
\bibliography{discr4v1}

\end{document}